\documentclass[11pt,reqno]{amsart} 
\usepackage{asymptote}
\usepackage[labelfont=bf]{caption}
\usepackage[utf8]{inputenc}
\usepackage{amsfonts,amsmath,amsthm,amssymb,fullpage}
\usepackage[justification=centering]{caption}
\usepackage{color}
\usepackage{enumitem}  
\usepackage{hyperref}
\usepackage{mathtools}
\usepackage{tikz}
\usepackage{xcolor}
\usepackage{thm-restate}
\usepackage{thmtools}
\usepackage[backend=biber,sorting=nty,style=numeric,maxbibnames=99]{biblatex}
\makeatletter
\newcommand{\addresseshere}{%
  \enddoc@text\let\enddoc@text\relax
}
\makeatother

\newtheorem{theorem}{Theorem}[section]

\newtheorem{lemma}[theorem]{Lemma}

\newtheorem{claim}[theorem]{Claim}
\newtheorem{conjecture}[theorem]{Conjecture}
\theoremstyle{definition}

\newtheorem{definition}[theorem]{Definition}
\title{An Improved Bound for Smith's Longest Cycles Conjecture via a Forbidden Subdivision 
}

\author[Chen]{Douglas M. Chen}
\address[D.~M.~Chen]{Columbia University, New York, NY 10027, USA}
\email{\textcolor{blue}{\href{mailto:dmc2265@columbia.edu}{dmc2265@columbia.edu}}}
\begin{document}

\begin{abstract}
Smith's conjecture asserts that in every $k$-connected graph with $k\geq 2$, any two longest cycles intersect in at least $k$ vertices.
In this work, we establish an $\Omega(k^{8/11})$ bound for this conjecture, improving upon the $\Omega(k^{2/3})$ bound of Ma and Zhao.
Our proof combines a Ramsey theoretic refinement of the traditional Tur\'an-type approach with computer search.
\end{abstract}

\maketitle
\section{Introduction}
The study of longest cycles is a classical part of extremal graph theory, yet several of its most fundamental questions remain open.
One such question from 1984, attributed to Smith, concerns the relationship between the connectivity of a graph and intersections of longest cycles \cite{bondy1995graphs}.
\begin{conjecture}[Smith]
\label{conj: smith}
    In every $k$-connected graph with $k\geq 2$, any two longest cycles intersect in at least $k$ vertices.
\end{conjecture}
\subsection{Previous Results and Approaches}
Smith's conjecture is known for several special cases, including when $k\leq 6$ by Gr\"otschel \cite{grotschel1984smith}, when $k \in \{ 7,8 \}$ by Stewart and Thompson \cite{stewart1995smith}, and when $k\geq \frac{n+16}{7}$, where $n$ is the order of the graph, by Guti\'errez and Valqui \cite{gutierrez2024smith}.

Concerning the general case, there has been a series of improvements via constant exponent steps towards the conjecture.
In 1995, Burr and Zamfirescu established an $\Omega(k^{1/2})$ bound \cite{bondy1995graphs} by establishing edge bounds for a natural auxiliary graph associated with the conjecture.
This was soon improved in 1998 by Chen, Faudree, and Gould, who showed an $\Omega(k^{3/5})$ bound \cite{chen1998smith}.
They take a Tur\'an-type approach by excluding a fixed subgraph, namely the complete bipartite graph $K_{3,257}$, from the auxiliary graph.
The idea is then to demonstrate that the presence of such a subgraph supports a rerouting that combines portions of the two longest cycles with additional edges to reveal a longer cycle, yielding a contradiction.
This rerouting is constructed by hand.

Nearly three decades later, in 2025, Groenland, Longbrake, Steiner, Turcotte, and Yepremyan established an $\Omega(k^{5/8})$ bound \cite{groenland2025smith}.
They also utilize a Tur\'an-type approach, excluding the $3$-cube with one added diagonal and $K_{3,3}$ from the auxiliary graph.
Additionally, they developed a new computational approach to construct the rerouting by encoding endpoint orders, reducing via symmetry, and resolving the remaining cases via computer search and linear programming.

A few months later, Ma and Zhao showed an $\Omega(k^{2/3})$ bound \cite{ma2025smith}.
They take a different approach by combining supersaturation estimates with an upper bound on the number of copies of $K_{2,7}$ in the auxiliary graph.
This upper bound is proven using an elegant combinatorial rerouting method.

For additional background on Smith's conjecture and related topics, see the surveys \cite{shabbir2013survey,zamfirescu2001survey}.
\subsection{Our Contribution}
We prove the following improved bound for Smith's conjecture, where we push the exponent of $k$ from $\frac23=0.6666\dots$ to $\frac{8}{11}=0.7272\dots$.
\begin{theorem}
\label{thm: main}
    In every $k$-connected graph with $k\geq 2$, any two longest cycles intersect in $\Omega(k^{8/11})$ vertices.
\end{theorem}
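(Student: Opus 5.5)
We follow the Turán-type framework described above. Let $C$ and $D$ be two longest cycles in a $k$-connected graph $G$, with $X=V(C)\cap V(D)$ and $|X|=m$, and assume for contradiction that $m=o(k^{8/11})$. The components of $C-X$ and $D-X$ are the \emph{$C$-segments} and \emph{$D$-segments}; there are at most $m$ of each. We form the auxiliary bipartite graph $H$ whose two sides are the $C$-segments and the $D$-segments. Here we use $k$-connectivity through Menger's theorem: between any $C$-segment and any $D$-segment there are many disjoint paths avoiding the rest of $C\cup D$, or more precisely, after deleting $X$ (which has fewer than $k$ vertices) the graph stays connected. This gives a dense family of ``bridging'' paths. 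The standard counting (Burr--Zamfirescu, Chen--Faudree--Gould) turns this into the statement that $H$ has at least roughly $k\cdot m$ edges among at most $2m$ vertices, or at least a lower bound on the density of $H$ in terms of $k/m$. Any rerouting that uses bridging paths between segments, together with arcs of $C$ and $D$, and produces a cycle longer than $|C|$ is forbidden. So certain configurations $F$ cannot appear in $H$ with the required endpoint orders along $C$ and $D$.

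The key step is to identify a forbidden subdivision $F$, meaning a bipartite graph whose every copy in $H$, after choosing the vertex orders cleverly, admits a longer cycle. The Ramsey refinement works as follows. A copy of a larger host structure (for instance a subdivision of some $K_{s,t}$) comes with the cyclic orders in which its attachment points appear on $C$ and on $D$. By Ramsey or Erdős--Szekeres type arguments (monotone subsequences in the cyclic orders), we can pass to a sub-configuration whose endpoint orders lie in one of a bounded number of canonical patterns. We then check by computer search, as in Groenland et al., that every canonical pattern supports a rerouting giving a longer cycle, using for lengths the fact that $C$ and $D$ both have maximum length, which is the linear programming certificate. This shows that $H$ contains no subdivision of $F$ in which each edge is subdivided only boundedly many times, which is the Ramsey-cleaned configuration.

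Finally we apply a Turán-type bound for graphs without such a subdivision. If $F$ is a bipartite graph for which the extremal number of the relevant subdivision family is $O(N^{2-\alpha})$ on $N$ vertices, then comparing with the lower bound on $e(H)$ forced by connectivity gives the inequality $k m \lesssim m^{2-\alpha}\cdot(\text{overhead})$. With the exponents chosen correctly this gives $m=\Omega(k^{8/11})$. I expect the exponent $8/11$ to arise from combining a bound like $\mathrm{ex}(N,\text{subdivision})=O(N^{3/2})$-type estimates with the supersaturation counting used by Ma and Zhao, optimized over a free parameter. I would first try to reproduce Ma--Zhao's $2/3$ computation and then see where the forbidden subdivision saves a factor.

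The main obstacle is the forbidden-configuration step: the rerouting must work for \emph{every} ordering pattern that survives the Ramsey reduction. Making the search finite relies on the symmetry reductions, and if some pattern fails, $F$ has to be enlarged, which weakens the Turán exponent. Balancing these two effects is what determines the final exponent.
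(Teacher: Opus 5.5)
\emph{(Notation: I write $X,Y$ for your cycles $C,D$, and $M$ for your intersection set $X$.)}

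There is a genuine gap in the quantitative skeleton; your general framework (auxiliary bipartite graph, Ramsey homogenization, computer-checked rerouting, Turán bound) is the right one. The auxiliary graph does not have $\approx km$ edges. Since $|M|=m<k$, the graph $G-M$ is $(k-m)$-connected, so Menger gives $k-m$ disjoint paths from $X-M$ to $Y-M$. A cycle-exchange argument shows no two of them join the same pair of fragments. This yields only $|V(\mathcal F)|\le 2m$ and $|E(\mathcal F)|\ge k-m$. (Your claim that every $C$-segment and $D$-segment are joined by many disjoint paths is false.)

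Your bound cannot hold. $\mathcal F$ has at most $m$ vertices per side, so $e(\mathcal F)\le m^2$, and $e(\mathcal F)\gtrsim km$ would give $m\gtrsim k$ with no forbidden subgraph at all. Likewise, your final inequality $km\lesssim m^{2-\alpha}$ forces $m\gtrsim k^{1/(1-\alpha)}>k$.

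With the correct input $k-m\le \mathrm{ex}(2m,F)$, the exponent $8/11$ requires exactly $\mathrm{ex}(n,F)=O(n^{11/8})$. No supersaturation or free-parameter optimization is involved, and an $O(N^{3/2})$-type bound only reproduces the Ma--Zhao exponent $2/3$. The missing choice is $F=K_{4,T}'$: the $1$-subdivision of $K_{4,T}$ (each edge subdivided exactly once, not ``boundedly many times''), with $T$ an absolute constant. For this graph Conlon--Janzer--Lee give $\mathrm{ex}(n,K_{s,t}')=O(n^{3/2-1/(2s)})$, which is $O(n^{11/8})$ at $s=4$.

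Second, the forbidden-configuration step is where all the work lies. You only assert that ``every canonical pattern supports a rerouting,'' without saying what is rerouted or why the result is longer; an unspecified LP certificate does not fill this, and none is needed. The paper's mechanism runs as follows.
\begin{enumerate}
\item In the bipartite graph $\mathcal F$, all branch vertices of a $K_{4,T}'$ lie on one side. After possibly swapping $X$ and $Y$, they are $X$-fragments $B_1,\dots,B_4$ and $A_1,\dots,A_T$.
\item Ramsey, applied to pairs $A_p\prec_X A_q$ colored by their order data, gives a homogeneous $K_{4,8}'$ with fixed patterns for: the order of the $a_{ij}$ inside each $A_j$; monotone $b_{ij}$ inside each $B_i$; the $\prec_Y$-order of $Y_{ip},Y_{iq}$; and the order of $y^{\pm}_{ij}$ inside $Y_{ij}$.
\item For an \emph{even routing} $I\subseteq[4]\times[8]$ (each index used an even number of times), the endpoints of the $2|I|$ chosen joining paths pair up consecutively inside fragments on both $X$ and $Y$.
\item The two complementary ways of closing these paths up with arcs of $X$ and $Y$ give $2$-regular graphs $\Gamma_1(I),\Gamma_2(I)$. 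Together they use every edge of $X$ and $Y$ exactly once and every joining path twice, so $|E(\Gamma_1(I))|+|E(\Gamma_2(I))|>2\ell$.
\item Hence if both are connected, one of them is a cycle longer than $\ell$. The computer search checks that for every homogeneous type some $I$ makes both connected.
\end{enumerate}
Without this parity and complementary-pair device, or a substitute, you have no finite, checkable statement to search over and no argument that the rerouted cycle is actually longer.
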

The ideas from \cite{chen1998smith} and the computer search method of \cite{groenland2025smith} are conceptually the most relevant to our approach.
The main limitation of the Tur\'an-type approach is that the auxiliary graph records only where two longest cycles meet and not the order of the corresponding endpoints along the cycles.
However, this order determines whether a proposed rerouting actually produces a longer cycle needed for the contradiction.
A construction that is valid for one endpoint ordering may get ``tangled" for another ordering and split into several smaller disjoint cycles.
Thus, excluding a new subgraph requires machinery to deal with an enormous family of possible endpoint orderings.

We exclude a larger subgraph, namely the subdivision of a $K_{4,T}$, by again proceeding by contradiction.
Assuming that this subgraph exists, our key new idea is that we can choose $T$ large enough to extract a small subgraph with very well-behaved endpoint orderings using Ramsey theory.
We can then guide our rerouting construction using this small subgraph, which allows us to reduce the problem down to a reasonable number of cases that can be resolved with computer search, thereby establishing the desired contradiction.
\subsection{Roadmap}
The rest of the paper is organized as follows.
We first review several relevant facts in Section \ref{section: prelim}.
In Section \ref{section: proof}, we formalize our computer search method and prove Theorem \ref{thm: main} conditional on a routing claim.
In Appendix \ref{section: computation}, we provide implementation details for our computer search and verify the missing claim.
\section{Preliminaries and Conventions}
\label{section: prelim}
Throughout the rest of this work, we let $G$ be a $k$-connected graph with $k\geq 2$ and let $X$ and $Y$ be two longest cycles of $G$ with $|X|=|Y|=\ell$.
Moreover, let $M=V(X) \cap V(Y)$ and $m=|M|$.
Note that if $m\geq k$, then our main result follows immediately, so we assume that $m<k$.

\subsection{Fragments and Joining Paths}
We now recall the construction of the auxiliary graph studied in previous works (cf. Definition 2.3 in \cite{ma2025smith}, \S 3 in \cite{groenland2025smith}, p. 146 in \cite{chen1998smith})
To our knowledge, this construction has no standard name, so for clarity, we introduce the terminology below.

First, we define the vertices.
Call each connected component of the graph $X-M$ an \emph{$X$-fragment} and each connected component of the graph $Y-M$ a \emph{$Y$-fragment}.
Let $\mathcal X$ denote the set of $X$-fragments and $\mathcal Y$ denote the set of $Y$-fragments.
Note that the graphs $X-M$ and $Y-M$ are disjoint unions of paths, so the elements of $\mathcal X$ and $\mathcal Y$ are paths in $G$.

Next, we define the edges.
Choose a maximum cardinality family $\mathcal J$ of pairwise vertex-disjoint paths in the graph $G-M$ such that each has one endpoint in $X-M$ and the other endpoint in $Y-M$, and shorten each path so that its interior avoids $X \cup Y$.
Call each resulting path a \emph{joining path}.

The \emph{fragment incidence graph}, denoted $\mathcal F=\mathcal F(\mathcal X,\mathcal Y,\mathcal J)$, is the bipartite graph with bipartition $\mathcal X\sqcup\mathcal Y$ and edge set $\mathcal J$.

The reason why $\mathcal F$ is a natural auxiliary graph to study is because of the following nice result, which guarantees that $\mathcal F$ is well-defined and is constrained by $k$ and $|V(X) \cap V(Y)|=m$.
\begin{lemma}
\label{lem: auxgraph}
    The bipartite graph $\mathcal F$ is simple, and $|V(\mathcal F)|\leq 2m$ and $|E(\mathcal F)|\geq k-m$.
\end{lemma}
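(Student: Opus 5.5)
We prove the three assertions separately, in increasing order of difficulty: the vertex count, then simplicity, then the edge bound.

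\emph{Vertex count.} Since $m<k$ and $G$ is $k$-connected, $G$ has more than $k\ge 2$ vertices. The cycle $X$ cannot lie inside $M$: otherwise $V(X)\subseteq V(Y)$, and since $|X|=|Y|$ we would get $V(X)=V(Y)=M$, so $m=\ell$. But a $k$-connected graph on more than $k$ vertices has minimum degree at least $k$ and hence a cycle of length at least $k+1>m$, a contradiction. So $X-M$ is nonempty. If $M=\emptyset$, then $\mathcal X=\{X\}$ is a single cycle rather than a path. In that case I would either note that the standard argument (two disjoint longest cycles in a $2$-connected graph can be joined by two disjoint paths to give a longer cycle) rules this out, or handle it directly. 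In fact the case $M=\emptyset$ is exactly Lemma-style folklore, and I will invoke it to assume $m\ge 1$. When $m\ge1$, removing the $m$ vertices of $M$ from the cycle $X$ leaves at most $m$ path components. So $|\mathcal X|\le m$, likewise $|\mathcal Y|\le m$, and therefore $|V(\mathcal F)|\le 2m$.

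\emph{Simplicity.} Suppose two disjoint joining paths $P,Q$ both connect the same $X$-fragment $A$ and the same $Y$-fragment $B$. Let their endpoints be $a_P,a_Q\in A$ and $b_P,b_Q\in B$. The fragments $A$ and $B$ are vertex-disjoint, since fragments avoid $M$. Write $A[a_P,a_Q]$ for the subpath of $A$ between $a_P$ and $a_Q$, and $B[b_P,b_Q]$ similarly. Consider the cycle $C'$ formed by $X - A[a_P,a_Q]^{\circ}$, then $P$, then $B[b_P,b_Q]$, then $Q$; here the superscript $\circ$ means we remove the interior and keep the endpoints. This is a cycle because $B$ avoids $X$ and the interiors of $P,Q$ avoid $X\cup Y$. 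Similarly form $C''$ from $Y - B[b_P,b_Q]^{\circ}$ together with $P$, $A[a_P,a_Q]$ and $Q$. Adding lengths, $|C'|+|C''| = 2\ell + 2(|P|+|Q|) > 2\ell$. Hence one of $C',C''$ is longer than $\ell$, contradicting maximality of $X$ and $Y$. So $\mathcal F$ has no multiple edges.

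\emph{Edge count.} This is the main step. By the maximality of $\mathcal J$ and Menger's theorem applied in $G-M$, the quantity $|\mathcal J|$ equals the maximum number of disjoint $V(X)\setminus M$ to $V(Y)\setminus M$ paths in $G-M$. This in turn equals the minimum size of a set $S\subseteq V(G)\setminus M$ separating $V(X)\setminus M$ from $V(Y)\setminus M$ in $G-M$; such a separator is allowed to contain vertices of $X$ or $Y$. Now $S\cup M$ is a set that meets every $X$–$Y$ path in $G$ not already passing through $M$. The remaining task is to check that $S\cup M$ actually separates some vertex of $X-M$ from some vertex of $Y-M$ in $G$, or else that it contains all of one side.

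Suppose $|S\cup M|<k$. By $k$-connectivity, $G-(S\cup M)$ is connected. So if both $(X-M)\setminus S$ and $(Y-M)\setminus S$ are nonempty, some path joins them avoiding $S\cup M$, which contradicts the choice of $S$. Hence one side, say $V(X)\setminus M$, lies entirely within $S$. Then $|S|\ge \ell-m$, and since $\ell\ge k+1$ (a $k$-connected graph has circumference at least $\min(n,2k)\ge k+1$), this gives $|S|\ge k-m$ anyway. In every case $|S|\ge k-|M|$, so $|E(\mathcal F)|=|\mathcal J|=|S|\ge k-m$.

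The main obstacle is this Menger and separator bookkeeping, specifically the degenerate case where the separator swallows an entire cycle side. The shortening of joining paths so that their interiors avoid $X\cup Y$ does not change their number or their disjointness, so it causes no difficulty.
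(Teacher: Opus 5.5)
Your proof is correct and matches what the paper intends: the paper gives no proof of its own and only cites Ma--Zhao's Proposition 2.2 as ``Menger's theorem and a cycle exchange argument''. Your separator argument in $G-M$ (using $\ell\ge k+1$ for the case where $S$ contains one side) is the Menger part, and your $C'$, $C''$ construction for simplicity is the cycle exchange. You also rightly note that $|V(\mathcal F)|\le 2m$ needs $M\neq\emptyset$, a point the paper leaves implicit. You can close it with your own two steps: for $m=0$ the edge bound gives two disjoint joining paths between $X$ and $Y$, and the same exchange argument, using arbitrary arcs in place of fragment subpaths, yields a cycle longer than $\ell$.
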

One can prove this result using Menger's theorem and a cycle exchange argument (cf. Proposition 2.2 in \cite{ma2025smith}).
Moreover, one can readily show as a corollary that $m\geq \sqrt{k}-1$ and hence derive Burr and Zamfirescu's $\Omega(k^{1/2})$ bound for Smith's conjecture.

\subsection{Subdivisions}
Recall that the \emph{subdivision} of a graph $H$ is the graph obtained by replacing all edges of $H$ by internally disjoint paths of length $2$. 
Let $H'$ denote the subdivision of $H$.

We are interested in subdivisions of complete bipartite graphs.
Given the subdivision $H'$ of a complete bipartite graph $H$, the vertices of $H'$ inherited from $H$ are called \emph{branch vertices}, the two partite sets of $H$, considered as subsets of $V(H')$, are called \emph{branch classes}, and the vertices introduced by subdividing the edges of $H$ are called \emph{subdividing vertices} \cite{hou2026subdiv}.  

We will invoke the following general result of Conlon, Janzer, and Lee \cite{conlon2021subdiv} on subdivisions of complete bipartite graphs.
\begin{theorem}[Conlon, Janzer, and Lee]
\label{thm: exsubdiv}
    For any integers $2\leq s\leq t$, every graph on $n$ vertices that does not contain a $K_{s,t}'$ has $O(n^{\frac32-\frac{1}{2s}})$ edges.
\end{theorem}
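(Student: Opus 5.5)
Although Theorem~\ref{thm: exsubdiv} is quoted from \cite{conlon2021subdiv}, here is how I would prove it. The plan is to reduce to the Kővári--Sós--Turán bound in an auxiliary ``two-path'' graph. Note that a $K_{s,t}'$ in $G$ is the same thing as a copy of $K_{s,t}$ in the graph of pairs joined by paths of length $2$, provided the $st$ middle vertices can be chosen distinct and disjoint from the branch vertices.

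\textbf{Regularization.} Suppose $e(G)\ge Cn^{3/2-1/(2s)}$ for a large constant $C$. By the standard Erdős--Simonovits / Jiang--Seiver regularization lemma, it suffices to work in a subgraph on $n'\le n$ vertices that is $K$-almost-regular with degree $d\ge C' n'^{1/2-1/(2s)}$. So assume $G$ itself has all degrees between $d$ and $Kd$.

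\textbf{Auxiliary graph.} Let $L$ be a large constant, say $L=s+t+st$. Call a pair $\{u,v\}$ \emph{heavy} if $\mathrm{codeg}(u,v)\ge L$ and \emph{light} if $1\le \mathrm{codeg}(u,v)<L$.
\begin{itemize}
\item Heavy pairs are harmless for embedding. If we find a $K_{s,t}$ all of whose pairs are light or heavy, each heavy pair has at least $L$ middle candidates, and these can be chosen greedily at the end.
\item Let $H$ be the graph of all pairs with positive codegree. The number of paths of length $2$ is $\sum_v\binom{d(v)}{2}=\Theta(nd^2)$. Since each light pair absorbs fewer than $L$ such paths, if the light pairs carry a constant fraction of the $2$-paths we get $e(H)\gtrsim nd^2/L$.
\item If instead the heavy pairs dominate, then the heavy-pair graph itself has $\Omega(nd^2/(Kd)^{\,})$-type density. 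I would handle this case by directly finding $K_{s,t}$ in the heavy graph, where embedding is trivial.
\end{itemize}
In either case the relevant graph has $\gtrsim nd^2 \ge C'^2 n^{2-1/s}$ edges. By Kővári--Sós--Turán supersaturation it then contains many copies of $K_{s,t}$, in fact $\Omega(n^{s+t}p^{st})$ of them with $p=d^2/n$.

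\textbf{Main obstacle: distinct middles.} The hard step is guaranteeing that the middle vertices are distinct and avoid the branch vertices. A single vertex $w$ could serve as the unique middle for many light pairs, for instance if $w$ is adjacent to all branch vertices. My plan is a counting argument.
\begin{itemize}
\item Count ``degenerate'' labelled configurations: copies of $K_{s,t}$ in $H$ together with a choice of middles in which some middle repeats or hits a branch vertex.
\item A coincidence of middles forces an extra $G$-edge pattern that costs a factor of roughly $1/d$ relative to a generic configuration. Using the almost-regularity bound $\Delta(G)\le Kd$, summing over which pairs coincide shows the degenerate configurations number $o(\cdot)$ of the total count of copies of $K_{s,t}$ with chosen middles.
\item The total count is at least the KST supersaturation count, since each light pair has at least one middle.
\end{itemize}
Hence some copy admits an injective choice of middles, giving a $K_{s,t}'$ and a contradiction. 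I expect the careful bookkeeping of these degenerate configurations, and the heavy-pair case, to be the bulk of the work. If the direct count is too lossy, I would first pass to a bipartite subgraph and use dependent random choice to select the $s$-side so that middles are forced into disjoint neighbourhoods.
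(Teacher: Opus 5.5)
The paper does not prove this statement. It quotes it from Conlon, Janzer and Lee and uses it as a black box, so your sketch has to stand on its own. It does not: the step you call the main obstacle rests on a false heuristic.

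\textbf{Main gap: distinct middles.} In your regime $d\approx C'n^{1/2-1/(2s)}$ you have $p\approx d^2/n\approx C'^2n^{-1/s}$. So the supersaturation lower bound $n^{s+t}p^{st}$ is only $\Theta(n^s)$. But every neighbourhood $N(v)$ is a clique of the two-path graph.

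Take $G$ to be $C_4$-free and almost regular of degree $d$, for instance a random subgraph of a polarity graph. Then every codegree is at most $1$. So there are no heavy pairs, every pair inside $N(v)$ is light, and its \emph{only} middle is $v$. The copies of $K_{s,t}$ lying inside single neighbourhoods number about $nd^{s+t}\approx n^{1+(s+t)(s-1)/(2s)}$. This exceeds $n^s$ by a factor $n^{(s-1)(t-s)/(2s)}$ whenever $t>s$, which is the regime where the theorem is sharp. Each of these copies admits only the totally degenerate choice of middles.

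Hence degenerate configurations are not $o(\cdot)$ of the total. They dominate both your lower bound and the true count, and ``total minus degenerate $>0$'' cannot be concluded. The heuristic fails because a generic configuration pays a factor $p$ for each of its $st$ pairs being at distance two. Coinciding middles make those conditions automatic, so a coincidence gains far more than the factor $1/d$ you charge for it.

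What is missing has two parts:
\begin{enumerate}
\item Build distinctness into what you count. For example, count pairs $(w,U)$ with $U$ an $s$-set reached from $w$ by light two-paths with distinct middles; this already removes the stars.
\item Control conflicts \emph{between} different $w$'s serving the same $U$: shared middles, and middles landing on branch vertices. This includes the case where $w,w'$ have large codegree.
\end{enumerate}
The second part is where the real work of the Conlon--Janzer--Lee proof lies, and your proposal gives no argument for it. Your dependent random choice fallback is too unspecific to assess. Note that the standard dependent random choice bound for bipartite graphs with maximum degree $2$ on one side only gives $O(n^{3/2})$, so it would need a substantial new ingredient.

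\textbf{Secondary gap: the heavy case.} As written, this case also fails. If heavy pairs carry most two-paths, you only get $\Omega(nd/K)$ heavy pairs, because codegrees can be as large as $Kd$. That is about $n^{3/2-1/(2s)}$, far below the K\H{o}v\'ari--S\'os--Tur\'an threshold $n^{2-1/s}$. So ``in either case the relevant graph has $\gtrsim nd^2$ edges'' is false.

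This case is easy to repair by localising. The number of heavy two-paths is $\sum_v e_{\mathrm{heavy}}(N(v))$. If this is at least half of $\sum_v\binom{d(v)}{2}$, some $N(v)$ has heavy density at least $1/2$. K\H{o}v\'ari--S\'os--Tur\'an inside $N(v)$, which has size at least $d\to\infty$, then gives a $K_{s,t}$ of heavy pairs. Its middles can be chosen greedily with your $L=st+s+t$.
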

This bound is tight when $t$ is sufficiently large compared to $s$.
\section{Proof of Theorem \ref{thm: main} and Reduction to Computer Search}
\label{section: proof}
In this section, we prove Theorem \ref{thm: main} conditional on a routing claim (cf. Claim \ref{claim: route}).
We also formalize our computer search approach that will prove this claim (cf. Appendix \ref{section: computation}).

\subsection{Setup} 
The proof of Theorem \ref{thm: main} follows readily once the following forbidden subgraph result is established.
\begin{theorem}
\label{thm: nosubdiv}
    There is an absolute integer $T$ such that the fragment incidence graph $\mathcal F=\mathcal F(\mathcal X, \mathcal Y, \mathcal J)$ contains no $K_{4,T}'$.
\end{theorem}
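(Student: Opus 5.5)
I argue by contradiction: suppose $\mathcal F$ contains a $K_{4,T}'$ with $T$ huge. A $K_{4,T}'$ in the bipartite graph $\mathcal F$ has all branch vertices on one side and all subdividing vertices on the other. By the symmetry between $X$ and $Y$, I may take the four branch vertices to be $X$-fragments $A_1,\dots,A_4$. For each $j\le T$ there is a branch vertex, an $X$-fragment $B_j$, and a subdividing vertex, a $Y$-fragment $C_{ij}$, for each $i$. Each $C_{ij}$ is joined by joining paths to $A_i$ and to $B_j$. All joining paths are vertex-disjoint, so the $4T+4T$ paths give $8T$ disjoint connections with distinct endpoints.

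The plan is to reduce the configuration to a bounded one by Ramsey theory and then exhibit a longer cycle. For each index $j$, I record a finite ``type'' consisting of the following data:
\begin{itemize}
\item the cyclic order along $X$ of the endpoints on $A_1,\dots,A_4$ of the four paths used by gadget $j$, relative to one another;
\item the relative order along $B_j$ of its four endpoints;
\item the position of the fragments $C_{1j},\dots,C_{4j}$ along $Y$;
\item the order of the two endpoints within each $C_{ij}$.
\end{itemize}
This colours indices by finitely many colours, and pairs and triples of indices by how their data interleave along $X$ and $Y$ (for instance, the order of the endpoints of gadgets $j$ and $j'$ on each $A_i$, and the order of the fragments $C_{ij}, C_{ij'}$ along $Y$). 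A pigeonhole step followed by Ramsey's theorem for hypergraphs, or equivalently Erd\H{o}s--Szekeres applied to a bounded number of linear orders, extracts a large subfamily $J$ of indices that is ``order-homogeneous''. In such a subfamily, each $A_i$ sees the gadgets in $J$ in an order that is monotone (increasing or decreasing) with respect to a fixed reference order of $J$. Likewise, the $Y$-positions of the $C_{ij}$ are monotone in $j$, and all gadgets have identical internal types. The number of colours is absolute, so $T$ depends only on how large $|J|$ must be, say $|J|=s$ for a small absolute $s$.

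With a homogeneous configuration on a bounded number of gadgets, the rerouting is the standard exchange. I delete some segments of $X$ and $Y$ that lie between endpoints and add joining paths (each of length $\ge 1$) in their place. The result is a closed walk covering a subset of $X\cup Y$ plus joining paths. The goal is a single cycle of length $>\ell$. For length, I use the usual counting: summing the lengths of two candidate cycles $Z_1,Z_2$ built from complementary segments gives $|Z_1|+|Z_2|\ge 2\ell+(\text{number of joining paths used})>2\ell$, so one of them is longer than $\ell$. This requires a pair of cycles whose $X\cup Y$ segments partition $E(X)\cup E(Y)$ between them. What remains is to check that for every homogeneous type, some such choice of segments and paths forms genuinely single cycles rather than tangling into several components.

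The main obstacle is this last verification. Even after homogenisation, the number of types is large: orientations of the four $A_i$ relative to the reference order, the interleaving on $Y$, and the internal orders on the $B_j$ and $C_{ij}$. I would reduce by the symmetries (reversing $X$ or $Y$, permuting the $A_i$, reversing the reference order), encode each remaining type as a cyclic word of endpoints, and have a computer search over subsets of the bounded gadget family for a pair of connected complementary reroutings. The homogeneity is what makes this a finite search: every larger configuration restricts to one of these bounded patterns. This is the routing claim that the paper defers to computation. I expect the danger is that some type admits no valid rerouting with few gadgets. If so, I would first try increasing $s$ (using more gadgets in $J$) before trying to change the forbidden graph.
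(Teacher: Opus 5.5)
Your architecture matches the paper's: take the branch vertices to be $X$-fragments, homogenise by Ramsey, compare two complementary reroutings whose lengths sum to $2\ell$ plus twice the joining-path lengths, and verify a routing claim by computer. But the rerouting step is missing the idea that makes it sound.

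\textbf{The rerouting need not give cycles.} You search for ``connected complementary reroutings'', yet connectivity is not the first obstruction: an $X$-segment and a $Y$-segment can share a vertex of $M$. Suppose the $X$-segment and the $Y$-segment through some $m\in M$ are both assigned to $Z_1$. Then $m$ has degree $4$ in $Z_1$, so $Z_1$ is not a cycle, and the length count gives nothing.

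Your types record only the order of endpoints and fragments. They do not record which vertices of $M$ lie on which $X$-arc and which $Y$-arc. So no search over types can certify that $Z_1,Z_2$ are simple, unless in each of them one of the two families of arcs stays inside single fragments and hence avoids $M$.

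The paper builds this in through \emph{even routings}. These are nonempty sets $I\subseteq[4]\times[8]$ such that, for each $(i,j)\in I$, both joining paths at $C_{ij}$ (to $A_i$ and to $B_j$, in your notation) are used, and every $A_i$ and every $B_j$ receives an even number of endpoints. List the used endpoints in order as $x_1,\dots,x_{2s}$ on $X$ and $y_1,\dots,y_{2s}$ on $Y$. Then every pair $x_{2h-1},x_{2h}$ and every pair $y_{2h-1},y_{2h}$ lies in a common fragment. Now let $\Gamma_1(I)$ use the arcs $X[x_{2h-1},x_{2h}]$ and $Y[y_{2h},y_{2h+1}]$, and let $\Gamma_2(I)$ use the complementary arcs. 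All $X$-arcs of $\Gamma_1(I)$ and all $Y$-arcs of $\Gamma_2(I)$ avoid $M$, so both graphs are $2$-regular wherever $M$ sits. Only after this does connectivity become a purely combinatorial property of the order type.

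\textbf{The decisive step is still only a plan.} Even with the right family of reroutings, you leave $s$ and the search space unspecified and allow that the search may fail. The theorem stands or falls with this step. The paper commits to a concrete claim and checks it exhaustively: every homogeneous $K_{4,8}'$-arrangement, encoded by $(\pi,\varepsilon,\sigma,\eta)$, admits an even routing of size at most $16$ making both $\Gamma_1(I)$ and $\Gamma_2(I)$ connected.

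\textbf{A smaller point on the types.} Your first bullet is vacuous. Gadget $j$'s endpoints on $A_1,\dots,A_4$ lie on four fixed fragments, so their cyclic order never changes. What the type actually needs is where $B_j$ sits among $A_1,\dots,A_4$ along $X$. The paper handles this by pigeonholing the $T$-side fragments into one arc between consecutive $4$-side fragments. It then roots $X$ at a vertex of $M$ just before the $4$-side fragment that opens that arc. This turns cyclic orders into linear ones, which is why colouring pairs suffices there.
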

We will prove Theorem \ref{thm: nosubdiv} via contradiction.
In particular, we assume that $\mathcal{F}$ contains a $K_{4,T}'$, where $T$ is a sufficiently large number to be determined later, and the goal is to utilize this subgraph to expose a cycle that is longer than either of $X$ and $Y$.

To extract useful information from this subgraph, we introduce the following definitions.
\begin{definition}
    Let $C \in \{ X,Y \}$. 
    A \emph{rooted orientation} of $C$ is a pair $\theta_C=(c_0,\delta_C)$, where $c_0 \in M$ and $\delta_C$ is one of the two cyclic directions around $C$ (i.e. clockwise or counterclockwise).
    For vertices $u,v \in V(C) \setminus \{ c_0 \}$, write $u\prec_{\theta_C} v$ when $u$ is passed before $v$ while traversing $C$ starting from $c_0$ in direction $\delta_C$.
    For distinct $C$-fragments $W_1$ and $W_2$, write $W_1\prec_{\theta_C} W_2$ when the vertices of $W_1$ are passed before the vertices of $W_2$ while traversing $C$ starting from $c_0$ in direction $\delta_C$.
\end{definition}
Note that $\prec_{\theta_C}$ defines a strict total order on both $V(C) \setminus \{ c_0 \}$ and the set of $C$-fragments.
When $\theta_X$ and $\theta_Y$ are understood, we abbreviate $\prec_{\theta_X}$ as $\prec_X$ and $\prec_{\theta_Y}$ as $\prec_Y$.
\begin{definition}
\label{def: arrangement}
    A \emph{$K_{4,t}'$-arrangement} in $\mathcal F$ consists of a subgraph $\mathcal H\cong K_{4,t}'$ of $\mathcal F$ with branch classes $\mathcal B=\{ B_1,B_2,B_3,B_4 \}\subseteq \mathcal X$ and $\mathcal A=\{ A_1,A_2,\dots,A_t \} \subseteq \mathcal X$, and rooted orientations $\theta_X=(x_0,\delta_X)$ and $\theta_Y=(y_0,\delta_Y)$ of $X$ and $Y$, respectively, such that $x_0 \in M$ immediately precedes $B_1$ in direction $\delta_X$ and 
    \[ B_1\prec_X A_1\prec_X A_2\prec_X\cdots\prec_X A_t\prec_X B_2\prec_X B_3 \prec_X B_4. \]
    Let $Y_{ij}$ denote the subdividing vertex of the edge $B_iA_j$ in $\mathcal F$.
    Let $P_{ij}^{\mathcal B} \in \mathcal J$ and $P_{ij}^{\mathcal A}\in \mathcal J$ denote the joining paths corresponding to the edges $B_iY_{ij}$ and $A_jY_{ij}$ of $\mathcal F$, respectively.
    Denote the endpoints of $P_{ij}^{\mathcal B}$ as $b_{ij} \in B_i$ and $y_{ij}^+ \in Y_{ij}$ and the endpoints of $P_{ij}^{\mathcal A}$ as $a_{ij} \in A_j$ and $y_{ij}^- \in Y_{ij}$.
\end{definition}
\begin{figure}
    \centering
    \includegraphics[width=10cm]{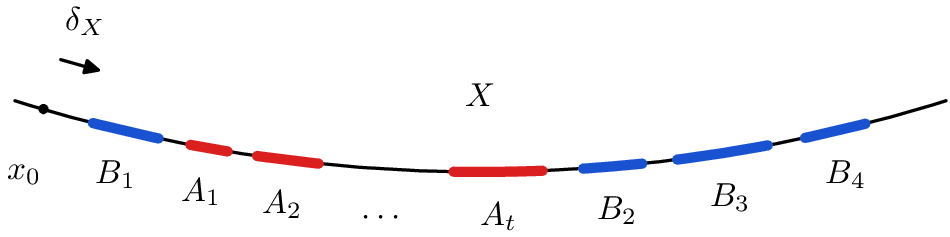}
    \caption{Illustration of a rooted orientation $\theta_X=(x_0,\delta_X)$ in a $K_{4,t}'$-arrangement.}
\end{figure}
\subsection{Untangling with Ramsey}
Our next step is to identify a subgraph with very well-behaved endpoint orders.
This will anchor our computation to a reasonable number of cases.
\newpage
\begin{definition}
\label{def: homogeneous}
    A $K_{4,t}'$-arrangement is \emph{homogeneous} if 
    \begin{enumerate}[label=(\roman*)]
        \item there is a single permutation $\pi \in \mathfrak S_4$ such that for all $j \in [t]$, we have
        \[ a_{\pi(1),j}\prec_X a_{\pi(2),j}\prec_X a_{\pi(3),j}\prec_X a_{\pi(4),j}, \]
        \label{condition (i)}
        \item for all $i \in [4]$, either $b_{i1}\prec_X b_{i2}\prec_X \cdots \prec_X b_{it}$ or $b_{it}\prec_X b_{i,t-1}\prec_X \cdots \prec_X b_{i1}$,
        \label{condition (ii)}
        \item there is a single permutation $\sigma \in \mathfrak S_8$ such that for all $1\leq p<q\leq t$, we have
        \[ F_{\sigma(1)}\prec_Y F_{\sigma(2)}\prec_Y F_{\sigma(3)}\prec_Y F_{\sigma(4)}\prec_Y F_{\sigma(5)}\prec_Y F_{\sigma(6)}\prec_Y F_{\sigma(7)}\prec_Y F_{\sigma(8)}, \]
        where $F_i=Y_{ip}$ and $F_{i+4}=Y_{iq}$ for all $i \in [4]$, and
        \label{condition (iii)}
        \item for all $i \in [4]$, either $y_{ij}^+\prec_Y y_{ij}^-$ for all $j \in [t]$ or $y_{ij}^-\prec_Y y_{ij}^+$ for all $j \in [t]$.
        \label{condition (iv)}
    \end{enumerate}
\end{definition}
\begin{lemma}
\label{lem: extract}
    There is an absolute integer $T$ such that if $\mathcal F$ contains a copy of $K_{4,T}'$ whose branch vertices are $X$-fragments, then $\mathcal F$ contains a homogeneous $K_{4,8}'$-arrangement.
\end{lemma}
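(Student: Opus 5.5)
We start from a copy $\mathcal H_0$ of $K_{4,T}'$ in $\mathcal F$ whose branch vertices are $X$-fragments, with branch classes $\mathcal B_0$ of size $4$ and $\mathcal A_0$ of size $T$. The plan is to first fix the orientation data, then pass to subsets of $\mathcal A_0$ by repeated pigeonhole and Ramsey steps, each preserving the structure already obtained. Every condition in Definition \ref{def: homogeneous} is a property of a single $A_j$ or of a pair $A_p,A_q$ with $p<q$. Hence restricting the branch class $\mathcal A$ to a subset, and reindexing in $\prec_X$ order, keeps all conditions already achieved. Since there are boundedly many steps, each needing only a bounded blow-up, some absolute $T$ suffices.

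\emph{Step 1: the arrangement.} The four elements of $\mathcal B_0$ cut $X$ into four arcs strictly between consecutive $B$'s. By pigeonhole, at least $T/4$ of the $A$'s lie in a single arc. Call the $B$ at the start of that arc (in some direction $\delta_X$) $B_1$. Let $x_0$ be the vertex of $M$ immediately preceding $B_1$ in direction $\delta_X$; it exists since fragments are separated by vertices of $M$, and as $m\ge 1$ here. Then $B_1\prec_X A\prec_X B_2\prec_X B_3\prec_X B_4$ for every surviving $A$, where $B_2,B_3,B_4$ are labelled in $\prec_X$ order. Choose $\theta_Y=(y_0,\delta_Y)$ arbitrarily and label the surviving $A$'s as $A_1\prec_X\cdots\prec_X A_{t_1}$. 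This gives a $K_{4,t_1}'$-arrangement with $t_1\ge T/4$.

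\emph{Step 2: single-index conditions.} Each $A_j$ determines a permutation in $\mathfrak S_4$ via the $\prec_X$ order of $a_{1j},\dots,a_{4j}$ (distinct vertices, since the joining paths are disjoint). Each $j$ also determines a vector in $\{\pm\}^4$ recording whether $y_{ij}^+\prec_Y y_{ij}^-$, noting these are distinct vertices by disjointness. Pigeonhole on these $24\cdot 16$ colours yields conditions \ref{condition (i)} and \ref{condition (iv)} on a subset of size $t_1/384$.

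\emph{Step 3: pair conditions via Ramsey.} For condition \ref{condition (ii)}, apply Erdős–Szekeres four times in succession, once for each $i$, to the sequence $(b_{ij})_j$. These are distinct points of the fragment $B_i$ under $\prec_X$, so we obtain monotone subsequences, shrinking $t$ to roughly $t^{1/16}$. For condition \ref{condition (iii)}, the fragments $Y_{ip}$ are pairwise distinct over all $(i,p)$ because $\mathcal H$ is a subdivision. So each pair $p<q$ receives a colour in $\mathfrak S_8$, namely the $\prec_Y$ order of $Y_{1p},\dots,Y_{4p},Y_{1q},\dots,Y_{4q}$. By the finite Ramsey theorem for $2$-subsets with $8!$ colours, a subset of size $8$ is monochromatic once $t\ge R_{8!}(8)$, which gives a single $\sigma$.

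Finally, set $T$ so large that after Steps 1–3 at least $8$ indices remain. Here $T = 4\cdot 384\cdot (R_{8!}(8))^{16}$ or similar suffices, with the Ramsey step applied before the Erdős–Szekeres step or after; the order is immaterial. The only real subtlety, and the step I would check most carefully, is distinctness of the relevant endpoints and fragments, so that the orders are strict and the colourings well defined. This comes from vertex-disjointness of the paths in $\mathcal J$ and from the simplicity of $\mathcal F$ in Lemma \ref{lem: auxgraph}. Otherwise the proof is routine Ramsey bookkeeping.
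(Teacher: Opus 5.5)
Your proof is correct, and it differs from the paper's mainly in how the homogenization is organized. Both arguments begin with the same pigeonhole step on the four arcs cut out by $\mathcal B$.

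\textbf{The paper's route.} After the arc step, the paper does everything in one shot. It colors each pair $p<q$ by a six-component color $\chi(A_p,A_q)$ recording:
\begin{itemize}
\item the $\prec_X$-order of $a_{1p},\dots,a_{4p}$, and likewise of $a_{1q},\dots,a_{4q}$;
\item the signs comparing $b_{ip}$ with $b_{iq}$;
\item the $y^{\pm}$ signs at $p$ and at $q$;
\item the $\mathfrak S_8$ pattern of the eight subdividing fragments.
\end{itemize}
It then applies the multicolor Ramsey theorem once, with $r=(4!)^2\cdot 8!\cdot 2^{12}$ colors. The single-index conditions (i) and (iv) are encoded redundantly at both endpoints of each pair. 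They are recovered by an overlap argument: for instance, $\chi_1(A_2,A_8)=\chi_2(A_1,A_2)$ forces $\chi_1^*=\chi_2^*$.

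\textbf{Your route.} You instead sort the conditions by arity. Conditions (i) and (iv) depend on a single index, so you get them directly by pigeonhole over $384$ classes. Condition (ii) is monotonicity, which you get by Erd\H{o}s--Szekeres. Only (iii) genuinely needs Ramsey, and there just with $8!$ colors.

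\textbf{Comparison.} Your version avoids the overlap trick and makes transparent which conditions actually require Ramsey theory. It also yields a far smaller value of $T$, although this is irrelevant for Theorem~\ref{thm: main}. The paper's version is more compact: one coloring, one Ramsey application, and $T\ge 4R_r(8)$ in a single line. Your explicit check that the endpoints and subdividing fragments are distinct, so that all the orders are strict and the colorings well defined, is a point the paper leaves implicit.
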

\begin{proof}
    Set $r=(4!)^2 \cdot 8! \cdot 2^{12}$, and choose $T\geq 4R_r(8)$ (here, $R_r(8)$ is the $r$-color Ramsey number for a monochromatic $K_8$).
    Let $\mathcal B\subseteq \mathcal X$ and $\mathcal A_0\subseteq \mathcal X$ denote the two classes of branch vertices in the given copy $\mathcal H\cong K_{4,T}'$, where $|\mathcal B|=4$ and $|\mathcal A_0|=T$.

    Now, fix a direction $\delta_X$ around $X$.
    Note that the fragments of $\mathcal B$ divide $X$ into four cyclic intervals.
    Since each fragment of $\mathcal A_0$ lies in one of these intervals, the pigeonhole principle implies that there is an interval $\mathcal I$ containing at least $\frac{T}{4}$ fragments of $\mathcal A_0$.
    Let $B_1$ and $B_2$ be the fragments of $\mathcal B$ immediately preceding and following $\mathcal I$, respectively, in direction $\delta_X$, and fix $x_0 \in M$ immediately before $B_1$.
    This defines a rooted orientation of $X$, namely $\theta_X=(x_0,\delta_X)$. 
    Label the remaining fragments of $\mathcal B$ as $B_3$ and $B_4$, and list the fragments of $\mathcal A_0$ contained in $\mathcal I$ as $A_1,A_2,\dots,A_t$, so that
    \[ B_1\prec_X A_1\prec_X A_2\prec_X\cdots\prec_X A_t\prec_X B_2\prec_X B_3\prec_X B_4 , \]
    where $t\geq \frac{T}{4}\geq R_r(8)$.
    Choose any rooted orientation $\theta_Y$ of $Y$.
    Then we have a $K_{4,t}'$-arrangement with branch classes $\mathcal B=\{ B_1,B_2,B_3,B_4 \}\subseteq \mathcal X$ and $\mathcal A=\{ A_1,A_2,\dots,A_t \} \subseteq \mathcal X$ and rooted orientations $\theta_X,\theta_Y$.
    
    Now, for each pair $A_p\prec_X A_q$ with $1\leq p<q\leq t$, define its color 
    \[ \chi(A_p,A_q)=(\chi_1(A_p,A_q),\chi_2(A_p,A_q),\chi_3(A_p,A_q),\chi_4(A_p,A_q),\chi_5(A_p,A_q),\chi_6(A_p,A_q)) \]
    as follows: 
    $\chi_1(A_p,A_q) \in \mathfrak S_4$ is the unique permutation for which 
    \[ a_{\chi_1(1),p}\prec_X a_{\chi_1(2),p}\prec_X a_{\chi_1(3),p}\prec_X a_{\chi_1(4),p}, \]
    $\chi_2(A_p,A_q) \in \mathfrak S_4$ is the unique permutation for which 
    \[ a_{\chi_2(1),q}\prec_X a_{\chi_2(2),q}\prec_X a_{\chi_2(3),q}\prec_X a_{\chi_2(4),q}, \]
    $\chi_3(A_p,A_q) \in \{ \pm 1 \}^4$ is the unique signing for which
    \[ \chi_3(i)=\begin{cases}
        +1 & \text{if $b_{ip}\prec_X b_{iq}$} \\
        -1 & \text{if $b_{iq}\prec_X b_{ip}$},
    \end{cases}
    \]
    $\chi_4(A_p,A_q) \in \{ \pm 1 \}^4$ is the unique signing for which
    \[ \chi_4(i)=\begin{cases}
        +1 & \text{if $y_{ip}^+\prec_Y y_{ip}^-$} \\
        -1 & \text{if $y_{ip}^-\prec_Y y_{ip}^+$},
    \end{cases} \]
    $\chi_5(A_p,A_q) \in \{ \pm 1 \}^4$ is the unique signing for which
    \[ \chi_5(i)=\begin{cases}
    +1 & \text{if $y_{iq}^+\prec_Y y_{iq}^-$} \\ 
    -1 & \text{if $y_{iq}^-\prec_Y y_{iq}^+$},
    \end{cases} \]
    and $\chi_6(A_p,A_q) \in \mathfrak S_8$ is the unique permutation for which 
    \[ F_{\chi_6(1)}\prec_Y F_{\chi_6(2)}\prec_Y F_{\chi_6(3)}\prec_Y F_{\chi_6(4)}\prec_Y F_{\chi_6(5)}\prec_Y F_{\chi_6(6)}\prec_Y F_{\chi_6(7)}\prec_Y F_{\chi_6(8)}, \]
    where $F_i=Y_{ip}$ and $F_{i+4}=Y_{iq}$ for all $i \in [4]$.

    Consider any such pair coloring.
    Note that there are $(4!)^2 \cdot (2^4)^3 \cdot 8!=r$ possible colors, and since $t\geq R_r(8)$, the multicolor Ramsey theorem implies that after a possible relabeling, the fragments $A_1\prec_X A_2\prec_X \cdots \prec_X A_8$ satisfy the following condition: for all $1\leq p<q\leq 8$, the colors $\chi(A_p,A_q)$ are the same.
    Thus, for all $1\leq p<q\leq 8$, we have
    \[ \chi(A_p,A_q)=(\chi_1^*,\chi_2^*,\chi_3^*,\chi_4^*,\chi_5^*,\chi_6^*) \]
    for some fixed $\chi_1^* \in \mathfrak S_4$, $\chi_2^* \in \mathfrak S_4$, $\chi_3^* \in \{ \pm 1 \}^4$, $\chi_4^* \in \{ \pm 1 \}^4$, $\chi_5^* \in \{ \pm 1 \}^4$, and $\chi_6^* \in \mathfrak S_8$.

    Now, consider $\hat{\mathcal H}\cong K_{4,8}'$, the subgraph of $\mathcal H\cong K_{4,T}'$ obtained by restricting to branch classes $\mathcal B=\{ B_1,B_2,B_3,B_4 \}$ and $\mathcal A=\{ A_1,A_2,\dots,A_8 \}$, together with $\theta_X,\theta_Y$.
    We claim that this comprises a homogeneous $K_{4,8}'$-arrangement.
    
    Indeed, unraveling definitions, it is immediate that $\sigma=\chi_6^*$ satisfies condition \ref{condition (iii)}.
    Next, repeatedly ordering each pair $b_{ip}$ and $b_{iq}$ with respect to $\prec_X$ using the common value of $\chi_3^*$ yields condition \ref{condition (ii)}.
    Moreover, $\chi_1(A_p,A_8)=\chi_1^*$ for all $p<8$ and $\chi_2(A_1,A_q)=\chi_2^*$ for all $q>1$, while by definition, we have the overlap $\chi_1(A_2,A_8)=\chi_2(A_1,A_2)$, so it follows that $\pi=\chi_1^*=\chi_2^* \in \mathfrak S_4$ satisfies condition \ref{condition (i)}.
    A similar overlapping argument shows that $\chi_4^*=\chi_5^*$, implying condition \ref{condition (iv)}.

    Therefore, with rooted orientations $\theta_X,\theta_Y$, we see that $\hat{\mathcal H}$ is a homogeneous $K_{4,8}'$-arrangement of $\mathcal F$. 
\end{proof}
We are now ready to prove the forbidden subgraph result.
\begin{proof}[Proof of Theorem \ref{thm: nosubdiv}]
    Assume not.
    After possibly exchanging the roles of $X$ and $Y$, Lemma \ref{lem: extract} implies that $\mathcal F$ contains a homogeneous $K_{4,8}'$-arrangement with rooted orientations $\theta_X=(x_0,\delta_X)$ and $\theta_Y=(y_0,\delta_Y)$ of $X$ and $Y$, respectively.
    Let $\mathcal H\cong K_{4,8}'$ be this copy with branch classes $\mathcal B=\{ B_1,B_2,B_3,B_4 \}\subseteq \mathcal X$ and $\mathcal A=\{ A_1,A_2,\dots,A_8 \}\subseteq \mathcal X$.

    Call a nonempty set $I \subseteq [4] \times [8]$ an \emph{even routing} if every $i \in [4]$ and every $j \in [8]$ occurs as a coordinate of an even number of elements of $I$.

    Let $I$ be an even routing with $|I|=s$ and $P(I)=\{ P_{ij}^{\mathcal B}: (i,j) \in I \} \cup \{ P_{ij}^{\mathcal A}:(i,j) \in I \}$.
    By construction, this is a family of $2s$ pairwise vertex-disjoint joining paths, each with one endpoint on $X$ and one endpoint on $Y$.
    For vertices $u,v \in V(X)$, let $X[u,v]$ denote the subpath of $X$ obtained by starting at $u$ and proceeding in direction $\delta_X$ until $v$ is reached; here, we allow the traversal to pass through $x_0$ if needed.
    Let $Y[u,v]$ denote the analogous subpath of $Y$.

    List the endpoints of $P(I)$ lying in $X$ as $x_1\prec_X x_2\prec_X \cdots\prec_X x_{2s}$.
    Since every branch fragment $B_i$ or $A_j$ is a contiguous subpath of $X-M$, the endpoints of $P(I)$ that lie in any fixed branch fragment form a consecutive block in the list.
    Moreover, the block corresponding to $B_i$ has size $|\{ j:(i,j) \in I \}|$, and the block corresponding to $A_j$ has size $|\{ i:(i,j) \in I \}|$, both of which are even since $I$ is an even routing.
    Thus, the last endpoint of each block must have even index, so the endpoints $x_{2h-1}$ and $x_{2h}$ lie in the same branch fragment for all $h \in [s]$.
    Similarly, list the endpoints of $P(I)$ lying in $Y$ as $y_1\prec_Y y_2\prec_Y \cdots\prec_Y y_{2s}$.
    For each $(i,j) \in I$, the subdividing fragment $Y_{ij}$ contains precisely the two endpoints $y_{ij}^+$ and $y_{ij}^-$, and since $Y_{ij}$ is a contiguous subpath of $Y-M$, these two endpoints form a consecutive block in the list.
    Thus, each block has size $2$, implying that the last endpoint of each block must have even index, so the $y_{2h-1}$ and $y_{2h}$ lie in the same subdividing fragment for all $h \in [s]$.

    Now, consider the two subgraphs
    \[ \Gamma_1(I)=\left( \bigcup_{P \in P(I)}P \right) \cup \left( \bigcup_{h=1}^{s}X[x_{2h-1},x_{2h}] \right) \cup \left( \bigcup_{h=1}^{s}Y[y_{2h},y_{2h+1}] \right) \]
    and 
    \[ \Gamma_2(I)=\left( \bigcup_{P \in P(I)}P \right) \cup \left( \bigcup_{h=1}^{s}X[x_{2h},x_{2h+1}] \right) \cup \left( \bigcup_{h=1}^{s}Y[y_{2h-1},y_{2h}] \right). \]
    Here, we consider each term as unions of subgraphs of $G$ and let $x_{2s+1}=x_1$ and $y_{2s+1}=y_1$.
    \begin{figure}
        \centering
        \includegraphics[width=10cm]{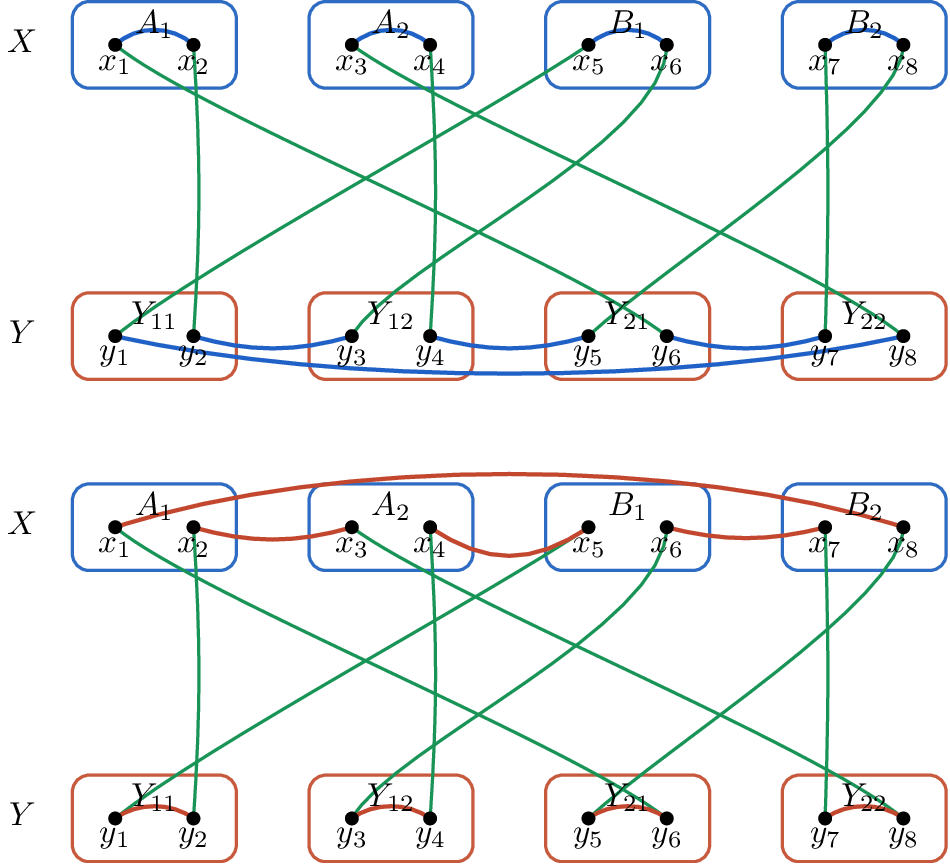}
        \caption{Schematic diagram of our subgraph constructions given an even routing $I\subseteq [2] \times [2]$.
        The top depicts $\Gamma_1(I)$, while the bottom depicts $\Gamma_2(I)$.
        Green edges represent joining paths, while all other edges represent cycle subpaths.}
    \end{figure}

    Next, we claim that $\Gamma_1(I)$ and $\Gamma_2(I)$ are $2$-regular.
    Indeed, note that $\Gamma_1(I)$ consists of three types of paths: joining paths, subpaths of $X$ connecting joining path endpoints, and subpaths of $Y$ connecting successive $Y$-fragments.
    By definition, joining paths are pairwise vertex-disjoint, and their internal vertices avoid $X \cup Y$.
    Moreover, subpaths of $X$ and $Y$ can only intersect at a vertex of $M$, but in $\Gamma_1(I)$, every subpath of $X$ lies inside an $X$-fragment and hence avoids $M$, so all subpaths of $X$ and $Y$ contained in $\Gamma_1(I)$ are pairwise vertex-disjoint.
    Thus, in $\Gamma_1(I)$, every internal vertex of a joining path has degree $2$, every internal vertex of a subpath of $X$ or $Y$ has degree $2$, and every endpoint of a joining path is incident to one edge of a joining path and one edge of $X$ or $Y$ and hence has degree $2$.
    A symmetric argument yields the same conclusion for $\Gamma_2(I)$.

    Hence, $\Gamma_1(I)$ and $\Gamma_2(I)$ are both disjoint unions of cycles.
    We then claim the following, which will be verified in Appendix \ref{section: computation}.
    \begin{claim}
    \label{claim: route}
        There is an even routing $I$ for which $\Gamma_1(I)$ and $\Gamma_2(I)$ are connected.
    \end{claim}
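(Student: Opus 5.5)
The claim is purely combinatorial once the homogeneous structure is fixed, so I would reduce it to a finite check over the data left free by Definition~\ref{def: homogeneous}. The connectivity of $\Gamma_1(I)$ and $\Gamma_2(I)$ depends only on the cyclic orders $\prec_X$ and $\prec_Y$ restricted to the endpoints of $P(I)$, together with the pairing recording which $X$-endpoint is joined to which $Y$-endpoint. Each $\Gamma_\iota(I)$ is then a $2$-regular graph on $2s$ "super-vertices" (the joining paths). Its cycle structure is obtained by composing two perfect matchings: the joining-path matching and one of the two consecutive-pairing matchings on the $X$-list or the $Y$-list. Concretely, $\Gamma_1(I)$ is connected if and only if the permutation $\{x_{2h-1},x_{2h}\}\circ\{y_{2h},y_{2h+1}\}$, pulled back along the joining paths, is a single $2s$-cycle. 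The analogous statement holds for $\Gamma_2(I)$.

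The first step is to show that, for every homogeneous $K_{4,8}'$-arrangement, these orders are determined by a bounded parameter vector. On the $X$-side this vector is $\pi\in\mathfrak S_4$, the four signs from condition (ii), and the fixed order $B_1\prec_X A_1\prec_X\cdots\prec_X A_8\prec_X B_2\prec_X B_3\prec_X B_4$. One subtlety is that the relative order of $a_{ij}$ within $A_j$ is given by $\pi$, and the order of $b_{ij}$ within $B_i$ is monotone in $j$. On the $Y$-side the vector is $\sigma\in\mathfrak S_8$, which fixes the relative order of any two "columns" $\{Y_{ip}\},\{Y_{iq}\}$, and the four signs of condition (iv). I also need a lemma that condition (iii), applied to all pairs $p<q$, determines the full order of all $32$ fragments $Y_{ij}$. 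I expect this to follow because $\sigma$ encodes a consistent pattern, e.g.\ an interleaving of the form "all $Y_{i p}$ before $Y_{i q}$" or the reverse for each $i$. Then the orders among fragments with different $i$ follow by transitivity, and the number of $\sigma$ actually realizable is small. Permutations $\sigma$ that are inconsistent across triples $p<q<r$ can be discarded by a direct consistency check.

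The second step is the search itself. For each of the at most $4!\cdot 2^4\cdot 8!\cdot 2^4$ parameter vectors, minus symmetries and inconsistent ones, I would enumerate candidate even routings $I\subseteq[4]\times[8]$. I would start from small ones, such as $4$-cycles $\{(i,p),(i,q),(i',p),(i',q)\}$, then $6$-cycles, then unions, and test connectivity of both $\Gamma_1(I)$ and $\Gamma_2(I)$ by the permutation-composition criterion. Having only $8$ columns available, rather than $2$, is exactly what lets us choose columns whose relative positions avoid tangling. The output is a table assigning to each parameter vector a certificate routing, which is then verified independently.

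I expect the main obstacle to be twofold. First, showing rigorously that the finitely many parameters capture all relevant order information, including the cyclic wraparound in $Y$ past $y_0$ and the fact that within-fragment orders are fully forced. Second, the possibility that some parameter vector admits no even routing that connects both $\Gamma_1$ and $\Gamma_2$ simultaneously. If the latter happens, I would first try enlarging the search to routings using all four rows and more columns, which is why $K_{4,8}'$ rather than $K_{2,t}'$ is needed. I would also exploit the symmetries reversing $\delta_Y$ and permuting $B_3,B_4$ to cut the case count to something tractable.
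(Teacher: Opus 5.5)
Your plan is essentially the paper's proof: encode each homogeneous arrangement by $(\pi,\varepsilon,\sigma,\eta)$, discard the $\sigma$ whose prescriptions on the $32$ fragments $Y_{ij}$ are inconsistent, contract the joining paths to $2s$ pseudovertices, and certify every case with a computer-found even routing (small sizes first) that makes both $\Gamma_1(I)$ and $\Gamma_2(I)$ connected. Two corrections are needed. First, the connectivity criterion is that the \emph{union} of the two perfect matchings is a single $2s$-cycle, which is equivalent to their composition having exactly two cycles of length $s$; a product of two fixed-point-free involutions is never a single $2s$-cycle, so the criterion as you wrote it can never be satisfied. Second, swapping $B_3,B_4$ on its own is not a symmetry, because the order $B_2\prec_X B_3\prec_X B_4$ is fixed and the between-fragment arcs of $\Gamma_2(I)$ depend on it.
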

    Assuming this claim, choose $I$ such that $\Gamma_1(I)$ and $\Gamma_2(I)$ are both simple cycles in $G$.
    By construction, an edge of $X$ is either in $\Gamma_1(I)$ or $\Gamma_2(I)$; the same is true in $Y$.
    Therefore, as $I$ is nonempty, $\sum_{(i,j) \in I}(|E(P_{ij}^{\mathcal B})|+|E(P_{ij}^{\mathcal A})|)>0$ and
    \[ |E(\Gamma_1(I))|+|E(\Gamma_2(I))|=|E(X)|+|E(Y)|+2\sum_{(i,j) \in I}(|E(P_{ij}^{\mathcal B})|+|E(P_{ij}^{\mathcal A})|)>2\ell, \]
    so one of the cycles $\Gamma_1(I)$ or $\Gamma_2(I)$ is longer than $\ell$, a contradiction.
\end{proof}
\subsection{Proof of Theorem \ref{thm: main}}
Since $\mathcal F$ contains no $K_{4,T}'$, Theorem \ref{thm: exsubdiv} yields $|E(\mathcal F)|\leq C|V(\mathcal F)|^{11/8}$, where $C$ is a constant depending only on $T$.
Applying Lemma \ref{lem: auxgraph}, we have that $|V(\mathcal F)|\leq 2m$ and $|E(\mathcal F)|\geq k-m$, so
\[ k-m\leq |E(\mathcal F)|\leq C(2m)^{11/8}. \]
Therefore, $k=O(m^{11/8})$ and hence $m=\Omega(k^{8/11})$. \qed
\printbibliography

@article{chen1998smith,
    author = {Guantao Chen and Ralph J. Faudree and Ronald J. Gould},
    title = {Intersections of Longest Cycles in $k$-Connected Graphs},
    journal = {Journal of Combinatorial Theory, Series B},
    year = {1998},
    volume = {72},
    number = {1},
    pages = {143-149}
}

@article{groenland2025smith,
    author = {Carla Groenland and Sean Longbrake and Raphael Steiner and J\'er\'emie Turcotte and Liana Yepremyan},
    title = {Longest Cycles in Vertex-Transitive and Highly Connected Graphs},
    journal = {Bulletin of the London Mathematical Society},
    year = {2025},
    volume = {57},
    number = {10},
    pages = {2975-2990}
}

@article{ma2025smith,
    author = {Jie Ma and Ziyuan Zhao},
    title = {Intersections of Longest Cycles in Vertex-Transitive and Highly Connected graphs},
    journal = {arXiv preprint arXiv:2508.17438},
    year = {2025}
}

@incollection{bondy1995graphs,
    author = {J. A. Bondy},
    title = {Basic Graph Theory: Paths and Circuits},
    booktitle = {Handbook of Combinatorics (Vol. 1)},
    isbn = {0262071703},
    publisher = {MIT Press},
    year = {1996},
    pages = {3-110}
}

@article{stewart1995smith,
    author = {Iain A. Stewart and Ben Thompson},
    title = {On the Intersections of Longest Cycles in a Graph},
    journal = {Experimental Mathematics},
    year = {1995},
    volume = {4},
    number = {1},
    pages = {41-48}
}

@article{gutierrez2024smith,
    author = {Juan Guti\'errez and Christian Valqui},
    title = {On Two Conjectures About the Intersection of Longest Paths and Cycles},
    journal = {Discrete Mathematics},
    year = {2024},
    volume = {347}, 
    number = {11}
}

@article{grotschel1984smith,
    author = {Martin Gr\"otschel},
    title = {On Intersections of Longest Cycles},
    journal = {Graph Theory and Combinatorics: Proceedings of the Cambridge Combinatorial Conference in Honour of Paul Erdős},
    editor = {B\'ela Bollob\'as},
    year = {1984},
    pages = {171-189}
}

@article{conlon2021subdiv,
    author = {David Conlon and Oliver Janzer and Joonkyung Lee},
    title = {More on the Extremal Number of Subdivisions},
    journal = {Combinatorica},
    year = {2021},
    volume = {41},
    pages = {465-494}
}

@article{zamfirescu2001survey,
    author = {Tudor Zamfirescu},
    title = {Intersecting Longest Paths or Cycles: A Short Survey},
    journal = {Analele Universității Din Craiova: Seria Matematică-Informatică},
    year = {2001},
    volume = {28},
    pages = {1-9},
    url = {http://tzamfirescu.tricube.de/TZamfirescu-151.pdf}
}

@article{shabbir2013survey,
    author = {Ayesha Shabbir and Carol T. Zamfirescu and Tudor I. Zamfirescu},
    title = {Intersecting Longest Paths and Longest Cycles: A Survey},
    journal = {Electronic Journal of Graph Theory and Applications},
    year = {2013},
    volume = {1},
    number = {1},
    pages = {56-76}
}

@article{hou2026subdiv,
    author = {Jianfeng Hou and Yindong Jin and Donglei Yang and Fan Yang},
    title = {Balanced Subdivisions and Cycle Lengths in $K_{s,t}$-Free Graphs},
    journal = {European Journal of Combinatorics},
    year = {2026},
    volume = {137}
}
\appendix
\section{Discussion of Computer Search for Claim \ref{claim: route}}
\label{section: computation}
We now discuss the implementation of our computer search method, which is attached as the ancillary file \texttt{routing\_computation.ipynb}.

\subsection{Cell 1}
Here, we set up the search process. 

By Definition \ref{def: homogeneous}, each homogeneous $K_{4,8}'$-arrangement can be encoded via $(\pi,\varepsilon,\sigma,\eta) \in \mathfrak S_4 \times \{ \pm1 \}^4 \times \mathfrak S_8 \times \{ \pm1 \}^4$.
Here, $\pi \in \mathfrak S_4$ and $\sigma \in \mathfrak S_8$ are as in conditions \ref{condition (i)} and \ref{condition (iii)}, respectively, while $\varepsilon \in \{ \pm1 \}^4$ is the unique signing for which
\[ \varepsilon(i)=\begin{cases}
    +1 & \text{ if $b_{i1}\prec_X b_{i2}\prec_X \cdots \prec_X b_{i8}$} \\
    -1 & \text{ if $b_{i8}\prec_X b_{i7}\prec_X \cdots \prec_X b_{i1}$},
\end{cases} \]
and $\eta \in \{ \pm 1 \}^4$ is the unique signing for which
\[ \eta(i)=\begin{cases}
    +1 & \text{ if $y_{ij}^+\prec_Y y_{ij}^-$ for all $j \in [8]$} \\
    -1 & \text{ if $y_{ij}^-\prec_Y y_{ij}^+$ for all $j \in [8]$}.
\end{cases} \]
We refer to $(\pi,\varepsilon)$ as an \emph{$X$-type} and $(\sigma,\eta)$ as a \emph{$Y$-type} since each pair determines the relevant order of endpoints in its associated cycle.

First, we enumerate all valid $\sigma \in \mathfrak S_8$, of which there are only $1296$, as certain permutations prescribe contradictory orderings of all $32$ subdividing fragments with $\prec_Y$.
The function \texttt{build\_order} checks for contradictions in this global ordering prescribed by a given $\sigma \in \mathfrak S_8$.
In particular, it repeatedly chooses a subdividing fragment that has no predecessor and appends it to the global ordering.
Either the process terminates by successfully ordering all $32$ subdividing fragments and returns them in increasing order, or at some point, the process halts because every remaining subdividing fragment has a predecessor (and hence the remaining comparisons contain a directed cycle) and rejects $\sigma$.

We also record the following combinatorial fact, which is tested in \texttt{column\_orders\_agree}.
For each $p<q$, consider $(Y_{1p},Y_{2p},Y_{3p},Y_{4p},Y_{1q},Y_{2q},Y_{3q},Y_{4q})$.
By construction, $\sigma \in \mathfrak S_8$ corresponds to an ordering of the entries of this list with respect to $\prec_Y$.
Restricting this order to the first $4$ entries gives a permutation $\alpha \in \mathfrak S_4$ such that $ Y_{\alpha(1),p}\prec_Y \cdots \prec_Y Y_{\alpha(4),p}$.  
We let $\beta \in \mathfrak S_4$ denote the analogous restriction to the last $4$ entries.
Since $\sigma$ is constant across all $p<q$, so are $\alpha$ and $\beta$.
We claim that $\alpha=\beta$.
Indeed, let $p<q<r$ be indices.
Considering the pair $(p,q)$, we deduce $Y_{\beta(1),q}\prec_Y \cdots \prec_Y Y_{\beta(4),q}$, while considering the pair $(q,r)$, we deduce $Y_{\alpha(1),q}\prec_Y \cdots \prec_Y Y_{\alpha(4),q}$, which forces $\alpha=\beta$, as claimed.

Next, we enumerate even routings using bitmasks, i.e. each set $I \subseteq [4] \times [8]$ is stored as a $32$ bit integer, with one bit per $(i,j) \in I$.
We also normalize these routings by assuming that the second coordinates occurring in $I$ is of the form $[h]$ for some $h$.
It turns out that one can always find a suitable even routing even with this restriction; in fact, one can show that if the second coordinates are $j_1<j_2<\cdots<j_h$, then replacing each $j_i$ with $i$ yields another suitable even routing.
Both aspects are implemented in \texttt{make\_routings\_by\_size}, which also organizes the routings $I$ by their size $|I|$.

Furthermore, to avoid encoding unnecessary joining path data, we contract each path of $P(I)=\{ P_{ij}^{\mathcal B}: (i,j) \in I \} \cup \{ P_{ij}^{\mathcal A}:(i,j) \in I \}$ into a single pseudovertex. 
Letting $|I|=s$, this gives $2s$ pseudovertices.
Since edge contraction preserves connectedness of a graph, connectedness of the contracted $\Gamma_i(I)$ is equivalent to connectedness of the original $\Gamma_i(I)$.
We then note that after contracting the joining paths, the subpaths of $X$ in $\Gamma_i(I)$ form a perfect matching of the pseudovertices, while the subpaths of $Y$ in $\Gamma_i(I)$ form another perfect matching of the pseudovertices.
Thus, to check connectedness of $\Gamma_i(I)$, we alternately follow edges from the matchings and test if we obtain a single cycle on all $2s$ pseudovertices.
This strategy is implemented in \texttt{matching\_union\_connected}.

Since the number of $Y$-types is much larger than the number of $X$-types, we break up the computation as follows.
The function \texttt{build\_x\_data} constructs the matching with subpaths of $X$ in $\Gamma_1(I)$, the analogous matching in $\Gamma_2(I)$, and the matching with subpaths of $Y$ in $\Gamma_2(I)$; in addition, the function checks if $\Gamma_2(I)$ is connected.
Separately, \texttt{gamma1\_connected} checks if $\Gamma_1(I)$ is connected using an implicit construction of the matching with subpaths of $Y$ in $\Gamma_1(I)$ and leveraging the other matching in $\Gamma_1(I)$ already constructed by \texttt{build\_x\_data}. 
\subsection{Cell 2}
We search for a suitable even routing for every homogeneous $K_{4,8}'$-arrangement. 
We split this search into two parts since even routings of smaller size deal with almost all arrangements.

The idea behind \texttt{search\_small\_routes} is as follows.
Fixing an $X$-type, we consider normalized even routings $I$ of sizes $4,6,8,10$, and we discard $I$ unless $\Gamma_2(I)$ is connected.
For each surviving routing $I$, we compute the set of $Y$-types for which $\Gamma_1(I)$ is connected; let $C(I)$ denote this set.
Let $U$ be the union of $C(I)$ across all surviving routings $I$.
We then prune out some surviving routings using \texttt{greedy\_subcover} so that the union of $C(I)$ across all $I$ in this reduced list of routings is still $U$.
This process is repeated independently for each $X$-type.

We resume from where we left off in \texttt{search\_large\_routes}.
In particular, it takes an abridged approach by considering normalized even routings of sizes $12,14,16$ only to resolve the arrangements not dealt with by \texttt{search\_small\_routes}.

\subsection{Cell 3}
The last cell verifies that Claim \ref{claim: route} holds.
In particular, for each $X$-type, we retain the list of routings that we found previously.
The function \texttt{verify\_routes} then checks that every routing in this list is a nonempty even routing and makes $\Gamma_2(I)$ connected for that $X$-type.
It then verifies, for every $Y$-type, that some routing in the same list also makes $\Gamma_1(I)$ connected.
Consequently, for each homogeneous $K_{4,8}'$-arrangement, there is an even routing $I$ that allows both $\Gamma_1(I)$ and $\Gamma_2(I)$ to be connected.

\subsection{Results}
Running all cells in the notebook gives an output of \texttt{PASS} from the last cell, meaning that the computer search has successfully proven Claim \ref{claim: route}.
The computation should take a few minutes on a standard modern computer.
\end{document}